\newtheorem{theorem}{Theorem}
\theoremstyle{remark}
\let\wt\widetilde
\renewcommand{\d}{{\mathrm d}}
\newcommand{\id}{\operatorname{id}}
\newcommand{\ord}{\operatorname{ord}}
\begin{document}

\title{Euler's factorial series and global relations}

%\author{Tapani Matala-aho and Wadim Zudilin}

%\address{Matematiikka, PL 3000, 90014 Oulun yliopisto, Finland}
%\email{tapani.matala-aho@oulu.fi}

%\address{Institute for Mathematics, Astrophysics and Particle Physics, Radboud Universiteit, PO Box 9010, 6500 GL Nijmegen, The Netherlands}
%\email{w.zudilin@math.ru.nl}

%\address{School of Mathematical and Physical Sciences, The University of Newcastle, Callag\-han, NSW 2308, Australia}
%\email{wadim.zudilin@newcastle.edu.au}

\author[1]{Tapani Matala-aho}
\author[2,3]{Wadim Zudilin}

\affil[1]{Matematiikka, PL 3000, 90014 Oulun yliopisto, Finland}
\affil[2]{Institute for Mathematics, Astrophysics and Particle Physics, Radboud~Universiteit, PO Box 9010, 6500 GL Nijmegen, The Netherlands}
\affil[3]{School of Mathematical and Physical Sciences, The~University~of~Newcastle, Callaghan, NSW 2308, Australia}

\date{7 March 2017}

\maketitle

\rightline{\it To the memory of Marc Huttner \rm(1947--2015)}

\begin{abstract}
Using Pad\'e approximations to the series $E(z)=\sum_{k=0}^\infty k!\,(-z)^k$,
we address arithmetic and analytical questions related to its values in both $p$-adic and Archimedean valuations.
\end{abstract}

\section{Introduction}

In his 1760 paper on divergent series \cite{Eu60}, L.~Euler introduced and studied the formal (hypergeometric) series
\begin{equation}\label{Eulerseries}
E(z):=\sum_{k=0}^\infty k!\,(-z)^k
\end{equation}
(see also \cite{BL76,Ba79} and, in particular, \cite[Section 2.5]{La13}),
which specializes at $z=1$ to Wallis' series
\begin{equation}\label{Wallisseries}
W:=\sum_{k=0}^\infty(-1)^kk!\,,
\end{equation}
which has teased people's imagination since the times of Euler.
Notice that the series \eqref{Eulerseries} is a perfectly convergent $p$-adic series in the disc $|z|_p\le1$ for all primes $p$,
so that one can discuss the arithmetic properties of its values, for example, at $z=1$ (the Wallis case) and at $z=-1$.
The irrationality of the latter specialization,
\begin{equation}\label{Kurepaseries}
K=K_p:=\sum_{k=0}^\infty k!\,,
\end{equation}
for any prime $p$ is a folklore conjecture~\cite[p.~17]{Sch84}
(see~\cite{MS04}, also for a link of the problem to a combinatorial conjecture of H.~Wilf).
Already the expectation $|K_p|_p=1$ (so that $K_p$ is a $p$-adic unit) for all primes $p$,
which is an equivalent form of Kurepa's conjecture \cite{Ku71} from 1971, remains open;
see \cite{Ch15} for the latest achievements in this direction.

In what follows, we refer to the series in \eqref{Eulerseries} as Euler's factorial series and label it $E_p(z)$ when treat it
as the function in the $p$-adic domain for a given prime~$p$.

\begin{theorem}\label{EulerGlobal1}
Given $\xi\in\mathbb Z\setminus\{0\}$, let $\mathcal P$ be a subset of prime numbers such that
\begin{equation}
\label{cond}
\limsup_{n\to\infty} c^nn!\prod_{p\in\mathcal P}|n!|_p^2=0,
\quad\text{where}\; c=c(\xi;\mathcal P):=4|\xi|\prod_{p\in\mathcal P}|\xi|_p^2.
\end{equation}
Then either there exists a prime $p\in\mathcal P$ for which $E_p(\xi)$~is irrational, or there are two distinct primes $p,q\in\mathcal P$
such that $E_p(\xi)\ne E_q(\xi)$ \textup(while $E_p(\xi),E_q(\xi)\in \mathbb{Q}$\textup).
\end{theorem}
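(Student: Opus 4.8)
The plan is to prove the theorem by the Pad\'e‑approximation method, transplanted into the ``global'' setting where the product formula for $\mathbb Q$ replaces the usual estimate ``a nonzero integer has absolute value $\ge1$''. I argue by contraposition: suppose the conclusion fails. Then every $E_p(\xi)$ with $p\in\mathcal P$ is rational and they all coincide, say $E_p(\xi)=r=a/b$ for all $p\in\mathcal P$, with $a\in\mathbb Z$, $b\in\mathbb Z_{>0}$ and $\gcd(a,b)=1$; the task is to derive a contradiction from \eqref{cond}.

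First I would construct Pad\'e approximations to $E$. Starting from the Borel integral $\int_0^\infty e^{-t}(1+zt)^{-1}\,\d t$ and imposing the orthogonality relations $[z^m]\bigl(B_n(z)E(z)\bigr)=0$ for $n+1\le m\le 2n$, one gets polynomials $A_n,B_n$ of degree $\le n$ with
\[
B_n(z)E(z)-A_n(z)=R_n(z)=\sum_{k\ge 2n+1}r_k z^k .
\]
The decisive arithmetic input is that $A_n,B_n\in\mathbb Z[z]$ (equivalently, the attached continued fraction has integer partial numerators). Since this is an identity of formal power series, it holds in every valuation; in particular, inside the disc $|z|_p\le1$ both $E$ and $R_n$ converge $p$-adically, so $B_n(\xi)E_p(\xi)-A_n(\xi)=R_n(\xi)$ holds $p$-adically for each $p\in\mathcal P$.

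Now put $\Lambda_n:=aB_n(\xi)-bA_n(\xi)$. As $\xi,a,b\in\mathbb Z$ and $A_n,B_n\in\mathbb Z[z]$, this is an \emph{integer}, and substituting $E_p(\xi)=a/b$ gives, for every $p\in\mathcal P$, the identity $\Lambda_n=b\bigl(E_p(\xi)B_n(\xi)-A_n(\xi)\bigr)=b\,R_n(\xi)$. Two estimates then feed the product formula. Archimedean: the explicit coefficients of $A_n,B_n$ (central‑binomial‑type growth, leading coefficient $(n+1)!$) yield $\max\bigl(|A_n(\xi)|,|B_n(\xi)|\bigr)\le C\,(4|\xi|)^n n!$. $p$-adic: orthogonality forces the leading remainder coefficient to be $r_{2n+1}=\pm\,n!\,(n+1)!$, so $|r_{2n+1}|_p\le|n!|_p^2$, while the coefficients $r_k$ with $k>2n+1$ carry even larger factorial factors; as $|\xi|_p\le1$ the leading term dominates in the ultrametric norm, whence $|R_n(\xi)|_p\le|n!|_p^2\,|\xi|_p^{2n+1}$. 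Discarding the factors $|\Lambda_n|_p\le1$ for $p\notin\mathcal P$ in $|\Lambda_n|\prod_{p}|\Lambda_n|_p=1$, I obtain
\[
1\le|\Lambda_n|\prod_{p\in\mathcal P}|\Lambda_n|_p
\le C\,(4|\xi|)^n n!\prod_{p\in\mathcal P}|n!|_p^2\,|\xi|_p^{2n}
=C\,c^n n!\prod_{p\in\mathcal P}|n!|_p^2 ,
\]
whose right-hand side tends to $0$ by \eqref{cond} --- the sought contradiction, valid for all large $n$ once $\Lambda_n\ne0$.

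The genuine work lies in the two arithmetic assertions underpinning this chain: the integrality $A_n,B_n\in\mathbb Z[z]$ and the exact evaluation $r_{2n+1}=\pm\,n!\,(n+1)!$ that produces the crucial \emph{square} $|n!|_p^2$ (a naive bound would only give one factorial and would miss \eqref{cond}); these require a careful analysis of the orthogonality-driven cancellations. The remaining point, nonvanishing of $\Lambda_n$ for infinitely many $n$, I expect to settle cleanly: the companion determinant $A_n B_{n+1}-A_{n+1}B_n$ equals a nonzero monomial $\pm c_n z^{2n+1}$, so for $\xi\ne0$ the remainders $R_n(\xi)$ and $R_{n+1}(\xi)$ cannot both vanish, whence $\Lambda_n\ne0$ for at least every other $n$. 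Granting these inputs, the product-formula count above closes the argument.
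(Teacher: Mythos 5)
Your proposal is correct and follows essentially the same route as the paper: explicit integer-coefficient Pad\'e approximants to $E(z)$, divisibility of every remainder coefficient by a product of two factorials giving the $p$-adic bound $|n!|_p^2|\xi|_p^{2n}$, the product formula applied to the nonzero integer $aB_n(\xi)-bA_n(\xi)$, and the Pad\'e determinant identity to guarantee nonvanishing for at least one of two consecutive indices. The two technical inputs you defer (integrality and the factorial leading coefficient of the remainder) are exactly what the paper's Theorem~2 supplies via the closed forms $Q_n(z)=\sum_{i=0}^n i!\binom{n}{i}^2z^i$ and $R_n(z)=n!^2z^{2n}\sum_{k\ge0}(-1)^kk!\binom{n+k}{k}^2z^k$.
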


Because $\prod_p|n!|_p=1/n!$ when the product is taken over all primes, condition \eqref{cond} is clearly satisfied for
any subset $\mathcal P$ whose complement in the set of all primes is finite. This also suggests more exotic choices of $\mathcal P$.
Furthermore, the conclusion of the theorem is contrasted with Euler's sum
\begin{equation*}
\sum_{k=0}^\infty k\cdot k!=-1,
\end{equation*}
which is valid in any $p$-adic valuation (and follows from $\sum_{k=0}^{n-1}k\cdot k!=n!-1$)\,---\,an example of what is called a \emph{global} relation.

The idea behind the proof of Theorem \ref{EulerGlobal1}
is to construct approximations $p_n/q_n$ to the number $\omega_p=E_p(\xi)$ in question, which do not depend on $p$ and approximate the number considerably well
for each $p$-adic valuation: $q_n\omega_p-p_n=r_{n,p}$ for $n=0,1,2,\dots$; $|r_{n,p}|_p\to0$ as $n\to\infty$ and there are infinitely many indices $n$
for which $r_{n,p}\ne0$ for at least one prime $p\in\mathcal P$.
Assume that $\omega_p=a/b$, the same rational number, for all $p\in\mathcal P$.
Then $q_na-p_nb\in\mathbb Z\setminus\{0\}$, so that $0<|q_na-p_nb|_p\le1$, for infinitely many indices $n$ and \emph{all} primes $p$, hence
\begin{align*}
1&=|q_na-p_nb|\prod_p|q_na-p_nb|_p
\le|q_na-p_nb|\prod_{p\in\mathcal P}|q_na-p_nb|_p
\displaybreak[2]\\
&\le(|a|+|b|)\max\{|q_n|,|p_n|\}\prod_{p\in\mathcal P}|br_{n,p}|_p
\\
&\le(|a|+|b|)\max\{|q_n|,|p_n|\}\prod_{p\in\mathcal P}|r_{n,p}|_p
\end{align*}
for those $n$. This means that the condition
\begin{equation}
\label{cond-gen}
\limsup_{n\to\infty}\max\{|q_n|,|p_n|\}\prod_{p\in\mathcal P}|r_{n,p}|_p=0
\end{equation}
contradicts the latter estimate, thus making the $\mathcal P$-global linear relation $\omega_p=a/b$ impossible.

The result in Theorem \ref{EulerGlobal1} can be put in a general context of global relations for Euler-type series;
the corresponding settings can be found in the paper \cite{BCY04}. We do not pursue this route here as our principal motivation
is a sufficiently elementary arithmetic treatment of an analytical function that have some historical value.
The rational approximations to $E(z)$ we construct in Section~\ref{sec-Pade} are Pad\'e approximations; in spite of being known for centuries,
implicitly from the continued fraction for $E(z)$ given by Euler himself \cite{Eu60} and explicitly from the work of T.\,J.~Stieltjes \cite{St94},
these Pad\'e approximations remain a useful source for arithmetic and analytical investigations.
In Section~\ref{sec-divergent} we revisit Euler's summation of Wallis' series \eqref{Wallisseries} using the approximations;
we also complement the derivation by providing `Archimedean analogue(s)' for the divergent series $K=E(-1)$ from \eqref{Kurepaseries}\,---\,the case
when the classical strategy does not work.

Our way of constructing the Pad\'e approximations is inspired by a related Pad\'e construction of M.~Hata and M.~Huttner in~\cite{HH02}.
The construction was a particular favourite of Marc Huttner who, for the span of his mathematical life, remained a passionate advocate
of interplay between Picard--Fuchs linear differential equations and Pad\'e approximations. We dedicate this work to his memory.

\section{Hypergeometric series and Pad\'e approximations}
\label{sec-Pade}

Euler's factorial series \eqref{Eulerseries} is the particular $a=1$ instance of the hypergeometric series
\begin{equation}\label{hyperseries}
{}_2F_0(a,1\mid z)=\sum_{k=0}^\infty(a)_kz^k.
\end{equation}
Here and in what follows, we use the Pochhammer notation $(a)_k$ which is defined inductively by
$(a)_0=1$ and $(a)_{k+1}=(a+k)(a)_k$ for $k\in\mathbb{Z}_{\ge 0}$. Our Pad\'e approximations below are given more generally
for the function~\eqref{hyperseries}.

\begin{theorem}\label{th-Pade}
For $n,\lambda\in\mathbb{Z}_{\ge 0}$, take
\begin{equation*}
B_{n,\lambda}(z)=\sum_{i=0}^n \binom ni\frac{(-1)^iz^{n-i}}{(a)_{i+\lambda}}.
\end{equation*}
Then $\deg_zB_{n,\lambda}=n$ and for a polynomial $A_{n,\lambda}(z)$ of degree $\deg_zA_{n,\lambda}\le n+\lambda-1$ we have
\begin{equation}\label{Pade}
B_{n,\lambda}(z)\,{}_2F_0(a,1\mid z)-A_{n,\lambda}(z)=L_{n,\lambda}(z),
\end{equation}
where
$\ord_{z=0}L_{n,\lambda}(z)=2n+\lambda$.
Explicitly,
\begin{equation}\label{remainderclosed}
L_{n,\lambda}(z)= (-1)^n n!\, z^{2n+\lambda} \sum_{k=0}^{\infty} k! \binom{n+k}k \binom{n+k+a+\lambda-1}k z^k.
\end{equation}
\end{theorem}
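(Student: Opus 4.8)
The plan is to treat \eqref{Pade} as an identity of formal power series in $z$ and to read off the coefficients of the product $B_{n,\lambda}(z)\,{}_2F_0(a,1\mid z)$ directly. The assertion $\deg_zB_{n,\lambda}=n$ is immediate, since the top-degree term of $B_{n,\lambda}$ is $z^n/(a)_\lambda$ (the $i=0$ summand) and $(a)_\lambda\ne0$. For the remainder, I would multiply $B_{n,\lambda}(z)=\sum_{i=0}^n\binom ni(-1)^iz^{n-i}/(a)_{i+\lambda}$ by $\sum_{k\ge0}(a)_kz^k$ and collect the coefficient of $z^m$ for $m\ge n$ (the only range relevant to the claim, since the lower-order terms are absorbed into $A_{n,\lambda}$), namely
\[
c_m=\sum_{i=0}^n\binom ni(-1)^i\frac{(a)_{m-n+i}}{(a)_{i+\lambda}}.
\]
Writing $m=n+\lambda+j$ with $j\ge0$ and using $(a)_{\lambda+i+j}/(a)_{\lambda+i}=(a+\lambda+i)_j$, this becomes $c_{n+\lambda+j}=\sum_{i=0}^n\binom ni(-1)^i(a+\lambda+i)_j$.

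The crucial observation is that this inner sum is a finite difference: for any function $f$ one has $\sum_{i=0}^n(-1)^i\binom nif(i)=(-1)^n\Delta^nf(0)$, where $\Delta$ is the forward-difference operator. Here $f(i)=(a+\lambda+i)_j$ is a polynomial in $i$ of degree $j$. Hence whenever $0\le j<n$ we get $\Delta^nf\equiv0$, so $c_{n+\lambda+j}=0$; this annihilates the coefficients of $z^{n+\lambda},\dots,z^{2n+\lambda-1}$ and forces $\ord_{z=0}L_{n,\lambda}\ge2n+\lambda$, while at the same time showing that the polynomial part $A_{n,\lambda}$ consists only of the terms of degree $\le n+\lambda-1$, exactly as claimed.

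For the explicit remainder I would set $j=n+k$ with $k\ge0$ and evaluate $\Delta^n$ on the Pochhammer symbol. The elementary identity $\Delta_x(x)_N=N\,(x+1)_{N-1}$ iterates to $\Delta_x^n(x)_N=\frac{N!}{(N-n)!}(x+n)_{N-n}$; applied with base point $x=a+\lambda$ and $N=n+k$ it gives
\[
c_{2n+\lambda+k}=(-1)^n\,\frac{(n+k)!}{k!}\,(a+\lambda+n)_k.
\]
Finally I would rewrite this in closed form using $n!\,k!\binom{n+k}k=(n+k)!$ and $\binom{n+k+a+\lambda-1}k=(a+\lambda+n)_k/k!$, obtaining $c_{2n+\lambda+k}=(-1)^nn!\,k!\binom{n+k}k\binom{n+k+a+\lambda-1}k$, which is precisely the coefficient of $z^{2n+\lambda+k}$ in \eqref{remainderclosed}. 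Since the $k=0$ coefficient equals $(-1)^nn!\ne0$, the order is exactly $2n+\lambda$, giving the last assertion.

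I expect the only genuine obstacle to be the bookkeeping around the finite-difference evaluation: one must iterate $\Delta^n$ on the Pochhammer symbol correctly (tracking the shift of the base point at each step) and then recognise the resulting factor $(a+\lambda+n)_k$ inside the generalized binomial $\binom{n+k+a+\lambda-1}k$. Everything else reduces to the single identity $\sum_i(-1)^i\binom nif(i)=(-1)^n\Delta^nf(0)$ applied to a polynomial of controlled degree, together with routine binomial simplifications.
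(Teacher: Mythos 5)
Your proposal is correct and follows essentially the same route as the paper: both reduce the coefficient of $z^{n+\lambda+j}$ to the $n$-th finite difference of the Pochhammer symbol $(a+\lambda+\,\cdot\,)_j$ (you difference in $i$, the paper in $a$, which is the same operation here), conclude vanishing for $j<n$ from the degree, and evaluate the $j=n+k$ case explicitly to obtain \eqref{remainderclosed}.
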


\begin{proof}
Relation \eqref{Pade} means that there is a `gap' of length $n$ in the power series expansion
\begin{equation*}
B_{n,\lambda}(z)\, {}_2F_0(a,1\mid z)=A_{n,\lambda}(z)+L_{n,\lambda}(z).
\end{equation*}
Write
$B_{n,\lambda}(z)= \sum_{h=0}^{n} b_{h}t^{h} $
and consider the series expansion of the product
\begin{equation*}
B_{n,\lambda}(z)\,{}_2F_0(a,1\mid z)=\sum_{l=0}^{\infty}r_lz^l,
\end{equation*}
where $r_l=\sum_{h+k=l}b_{h}(a)_k$; in particular,
\begin{equation*}
r_l=\sum_{i=0}^n (-1)^i\binom ni\frac{(a)_{i+\lambda+m}}{(a)_{i+\lambda}}
=\sum_{i=0}^n (-1)^i\binom ni(a+i+\lambda)_m
\end{equation*}
with $m=l-n-\lambda$ for $l>n+\lambda-1$. To verify the desired `gap' condition,
\begin{equation}\label{gap-cond}
r_{n+\lambda}=r_{n+\lambda+1}=\dots=r_{n+\lambda+n-1}=0,
\end{equation}
we introduce the shift operators $N=N_a$ and $\Delta=\Delta_a=N-\id$ defined on functions $f(a)$ by
\begin{equation*}
Nf(a)=f(a+1) \quad\text{and}\quad \Delta f(a)=f(a+1)-f(a).
\end{equation*}
It follows from
\begin{equation*}
\Delta^n (a+\lambda)_m=(-1)^n(-m)_n (a+\lambda+n)_{m-n}
\quad\text{for}\; n\in\mathbb{Z}_{\ge 0}
\end{equation*}
that
\begin{align*}
r_l
&=\sum_{i=0}^n (-1)^i\binom ni N^i(a+\lambda)_m
=(\id-N)^n (a+\lambda)_m
\\
&=(-\Delta)^n (a+\lambda)_m
=(-m)_n(a+\lambda+n)_{m-n},
\end{align*}
which, in turn, implies \eqref{gap-cond} because of the vanishing of $(-m)_n$ for $m=0,1,\dots,n-1$.
The explicit expression for $r_l$ just found also gives
\begin{equation*}
r_{2n+\lambda+k}=(-n-k)_n (a+\lambda+n)_k
=(-1)^n n!\,\binom{n+k}k\,k!\,\binom{n+k+a+\lambda-1}k,
\end{equation*}
hence the closed form \eqref{remainderclosed}. We also have
\begin{equation*}
A_{n,\lambda}(z)=\sum_{l=0}^{n+\lambda-1}r_lz^l
=\sum_{l=0}^{n+\lambda-1}z^l\sum_{\substack{i=0\\i\ge n-l}}^n (-1)^i\binom ni\frac{(a)_{i+l-n}}{(a)_{i+\lambda}}.
\end{equation*}
This concludes our proof of the theorem.
\end{proof}

From now on we choose $a=1$, $\lambda=0$, change $z$ to $-z$ and renormalize the corresponding Pad\'e approximations
produced by Theorem~\ref{th-Pade} by  multiplying them by $(-1)^nn!$\,:
\begin{align*}
Q_n(z):=(-1)^nn!\,B_{n,0}(-z)
&=\sum_{i=0}^n\binom ni \frac{n!}{i!}\,z^{n-i}
=\sum_{i=0}^n i!\,{\binom ni}^2z^i,
\\
P_n(z):=(-1)^nn!\,A_{n,0}(-z)
&=(-1)^n\sum_{l=0}^{n-1}(-z)^l\sum_{i=n-l}^n (-1)^{n-i}\binom ni\frac{n!\,(i+l-n)!}{i!}
\\
&=(-1)^n\sum_{l=0}^{n-1}(-z)^{l}\sum_{i=0}^{l} (-1)^ii!\,(l-i)!\,{\binom ni}^2
\\ \intertext{and}
R_n(z):=(-1)^nn!\,L_{n,0}(-z)
&=n!^2z^{2n}\sum_{k=0}^\infty(-1)^kk!\,{\binom{n+k}k}^2z^k.
\end{align*}
In this notation, the Pad\'e approximation formula \eqref{Pade} may be rewritten as
\begin{equation}\label{PADE3}
Q_n(z)E(z)-P_n(z)=R_n(z)
\end{equation}
for $n\in\mathbb{Z}_{>0}$. Observe that
\begin{equation}\label{detqp}
Q_n(z) P_{n+1}(z) - Q_{n+1}(z) P_n(z)=n!^2 z^{2n}.
\end{equation}
Indeed, the standard Pad\'e walkabout proves the identity
\begin{equation*}
Q_n(z) P_{n+1}(z) - Q_{n+1}(z) P_n(z)=Q_{n+1}(z) R_n(z) - Q_n(z) R_{n+1}(z),
\end{equation*}
in which the degree of the left-hand side is at most $2n$ while the order of the right-hand side is at least $2n$.

\begin{proof}[Proof of Theorem \textup{\ref{EulerGlobal1}}]
We may use the formal series identity \eqref{PADE3} to get appropriate numerical approximations for any
at $z=\xi\in\mathbb Z\setminus\{0\}$. For $n=1,2,\dots$, take $p_n=P_n(\xi)\in\mathbb Z$, $q_n=Q_n(\xi)\in\mathbb Z$ and define
$$
r_{n,p}=R_n(\xi)=q_nE_p(\xi)-p_n
$$
for each prime $p\in\mathcal P$. Using elementary summation formulas and trivial estimates for binomials we have
\begin{align*}
|q_n|&\le |\xi|^nn!\sum_{i=0}^n{\binom ni}^2=|\xi|^nn!\binom{2n}n<4^n|\xi|^nn!\,,
\\
|p_n|&\le|\xi|^{n-1}n\sum_{i=0}^{n-1} i!\,(n-1-i)!\,{\binom ni}^2
\le|\xi|^{n}n!\sum_{i=0}^n{\binom ni}^2<4^n|\xi|^nn!
\end{align*}
and
\begin{equation*}
|r_{n,p}|_p=|\xi|_p^{2n}|n!|_p^2\biggl|\sum_{k=0}^\infty(-1)^kk!\,{\binom{n+k}k}^2\xi^k\biggr|_p \le|\xi|_p^{2n}|n!|_p^2.
\end{equation*}
Therefore, condition \eqref{cond-gen} reads
$$
\limsup_{n\to\infty}4^n|\xi|^nn!\prod_{p\in\mathcal P}|\xi|_p^{2n}|n!|_p^2=0
$$
and because for at least one $p\in\mathcal P$ we have either $r_{n,p}\ne0$ or $r_{n+1,p}\ne0$ from \eqref{detqp},
the theorem follows.
\end{proof}

\section{Summation of divergent series}
\label{sec-divergent}

Fix a prime $p$.
If we restrict, for simplicity, the sequence of indices $n$ to the arithmetic progression $n\equiv0\bmod p$, then
it is not hard to see from the calculation in Section~\ref{sec-Pade} that the sequence of rational approximations $p_n/q_n=P_n(\xi)/Q_n(\xi)$
converges $p$-adically to $E(\xi)$. Indeed,
\begin{equation*}
q_n=1+\sum_{i=1}^{p-1}i!\,{\binom ni}^2\xi^i+\sum_{i=p}^n i!\,{\binom ni}^2\xi^i
\equiv1\bmod p
\end{equation*}
is a $p$-adic unit (all the binomials are divisible by $p$ in the first sum, while $i!$ is divisible by $p$ in the second one), and
\begin{equation*}
\biggl|E(\xi)-\frac{p_n}{q_n}\biggr|_p
=|q_n|_p^{-1}|q_nE(\xi)-p_n|_p=|r_{n,p}|_p
\to0 \quad\text{as}\; n\to\infty.
\end{equation*}

When $\xi>0$, the same sequence of rational numbers $p_n/q_n=P_n(\xi)/Q_n(\xi)$ converges to the value at $z=\xi$ of the integral
\begin{equation}\label{tilde-E}
\tilde E(z)=\int_0^\infty\frac{e^{-s}}{1+zs}\,\d s
\end{equation}
with respect to the Archimedean norm. The integral itself converges for any $z\in\mathbb C\setminus(-\infty,0]$,
though its formal Taylor expansion at $z=0$ (obtained by expanding $1/(1+zs)$ into the power series under the integral sign)
is precisely Euler's factorial series $E(z)$ as in~\eqref{Eulerseries}. In particular, relation \eqref{PADE3} remains valid
with $E(z)$ replaced by $\tilde E(z)$ and $R_n(z)$ by
\begin{equation*}
\tilde R_n(z)=n!\,z^{2n}\int_0^\infty\frac{s^ne^{-s}}{(1+zs)^{n+1}}\,\d s
\quad\text{for}\; n=0,1,2,\dots,
\end{equation*}
so that
\begin{equation*}
\biggl|\tilde E(\xi)-\frac{p_n}{q_n}\biggr|
=\frac{\wt R_n(\xi)}{Q_n(\xi)}
\le\frac{n^n}{(n+1)^{n+1}}\to0 \quad\text{as}\; n\to\infty,
\end{equation*}
where the estimates $Q_n(\xi)\ge n!\,\xi^n$ and
\begin{equation*}
\tilde R_n(\xi)\le n!\,\xi^{2n}\biggl(\max_{s>0}\frac{s^n}{(1+\xi s)^{n+1}}\biggr)\int_0^\infty e^{-s}\,\d s
=n!\,\xi^n\frac{n^n}{(n+1)^{n+1}}
\end{equation*}
were used. This computation reveals us that
\begin{equation*}
\lim_{n\to\infty}\frac{p_n}{q_n}
=\tilde E(\xi)
=-xe^x\biggl(\gamma+\log x+\sum_{k=1}^\infty\frac{(-x)^k}{k\cdot k!}\biggr)\bigg|_{x=1/\xi},
\end{equation*}
where $\gamma=0.5772156649\ldots$ is Euler's constant.
In particular,
\begin{equation*}
W=\tilde E(1)
=e\biggl(-\gamma+\sum_{k=1}^\infty\frac{(-1)^{k-1}}{k\cdot k!}\biggr)
=0.5963473623\ldots
\end{equation*}
for Wallis' series \eqref{Wallisseries}. The resulted quantity is known as the Euler--Gompertz constant \cite{La13}.

The strategy in the previous paragraph does not apply to $z=\xi<0$, somewhat already observed by Stieltjes in \cite{St94}.
The analytical continuation of the the function $\tilde E(z)$ depends on whether we perform the integration along the upper or lower banks of the ray $[0,\infty)$
in~\eqref{tilde-E}; denote the corresponding values by $\tilde E_+(z)$ and $\tilde E_-(z)$, respectively.
By considering the integration of $e^{-s}/(1+zs)$ along the curvilinear triangle that consists of the segment $[0,R]$ (along a particular bank),
the arc $[R,R\,e^{\sqrt{-1}\theta}]$ followed by the segment $[R\,e^{\sqrt{-1}\theta},0]$, where $0<\theta<\pi/2$ for the upper bank and $-\pi/2<\theta<0$ for
the lower one, and then taking the limit as $R\to\infty$ (so that the integral along the arc tends to~$0$) we conclude that
\begin{equation}\label{xi<0}
\tilde E_\pm(z)=\int_0^{e^{\sqrt{-1}\theta}\infty}\frac{e^{-s}}{1+zs}\,\d s
=-xe^x\biggl(\gamma+\log|x|\pm\sqrt{-1}\pi+\sum_{k=1}^\infty\frac{(-x)^k}{k\cdot k!}\biggr)\bigg|_{x=1/z},
\end{equation}
with the choice of $\theta$ arbitrary in the interval $0<\theta<\pi/2$ for $\tilde E_+(z)$ and
in the interval $-\pi/2<\theta<0$ for $\tilde E_-(z)$.
In particular,
\begin{align*}
K=\tilde E_{\pm}(-1)
&=\frac1e\biggl(\gamma+\sum_{k=1}^\infty\frac1{k\cdot k!}\mp\sqrt{-1}\pi\biggr)
\\
&=0.6971748832\ldots\mp\sqrt{-1}\cdot1.1557273497\ldots
\end{align*}
for the series in \eqref{Kurepaseries}.

\section{Final remarks}
\label{sec-final}

In \cite{Zu12} we outline a different strategy of proving a result analogous to Theorem \ref{EulerGlobal1}
on using the Hankel determinants generated by the tails of Euler's factorial series \eqref{Eulerseries}.
As the condition on a subset of primes $\mathcal P$ in that result is spiritually similar to \eqref{cond},
we do not detail the derivation here. However we stress that a potential combination of the two methods,
namely, using the Hankel determinants generated by the Pad\'e approximations of Euler's factorial series,
may be a source of further novelties on the topic. A discussion on this type of construction in the Archimedean setting can be found in \cite{Zu17}.

One consequence of the formula in \eqref{xi<0}, which uncovers a pair of \emph{complex} conjugate values for $E(\xi)$ when $\xi<0$,
is that the \emph{rational} approximations $p_n/q_n=P_n(\xi)/Q_n(\xi)$ do not converge at all in such cases. Interestingly enough,
the Hankel determinants `see' those complex values \eqref{xi<0} as experimentally observed in~\cite{Zu12}.

Finally, we would like to note that nothing is known about the irrationality and transcendence of the Archimedean valuations of
Euler's factorial series \eqref{Eulerseries} at rational $z=\xi$ (see the discussion in \cite[Sections 3.15, 3.16]{La13}).
This is in contrast with its $q$-analogue
$$
\sum_{k=0}^\infty z^k\prod_{i=1}^k(1-q^i),
$$
for which irrationality and linear independence results are known in Archimedean and non-Archimedean places alike\,---\,see~\cite{Ma09}.
Further details on a nice $q$-counterpart of the Pad\'e approximation analysis can be found in~\cite{Ma11}.

%==================================================

\end{document}